\numberwithin{equation}{section}
\numberwithin{figure}{section}
\theoremstyle{plain}
\newtheorem{thm}{\protect\theoremname}
\theoremstyle{remark}
\newtheorem{rem}[thm]{\protect\remarkname}
\theoremstyle{definition}
\newtheorem{defn}[thm]{\protect\definitionname}
\theoremstyle{plain}
\newtheorem{lem}[thm]{\protect\lemmaname}
\theoremstyle{plain}
\newtheorem{prop}[thm]{\protect\propositionname}
\theoremstyle{plain}
\newtheorem{conjecture}[thm]{\protect\conjecturename}
\theoremstyle{definition}
\newtheorem{example}[thm]{\protect\examplename}
\DeclareMathOperator{\Paley}{Paley}
\providecommand{\conjecturename}{Conjecture}
\providecommand{\definitionname}{Definition}
\providecommand{\examplename}{Example}
\providecommand{\lemmaname}{Lemma}
\providecommand{\propositionname}{Proposition}
\providecommand{\remarkname}{Remark}
\providecommand{\theoremname}{Theorem}
\begin{document}
\title{Paley Graphs and S\'{a}rk\"{o}zy's Theorem In Function Fields}
\author{Eric Naslund}
\email{naslund.math@gmail.com}
\date{\today}
\begin{abstract}
S\'{a}rk\"{o}zy's theorem states that dense sets of integers must
contain two elements whose difference is a $k^{th}$ power. Following
the polynomial method breakthrough of Croot, Lev, and Pach \cite{CrootLevPachZ4},
Green proved a strong quantitative version of this result for $\mathbb{F}_{q}[T]$.
In this paper we provide a lower bound for S\'{a}rk\"{o}zy's theorem
in function fields by adapting Ruzsa's construction \cite{Ruzsa1984DifferenceSetsWithoutSquares}
for the analogous problem in $\mathbb{Z}$. We construct a set $A$
of polynomials of degree $<n$ such that $A$ does not contain a $k^{th}$
power difference with $|A|=q^{n-n/2k}$. Additionally, we prove a
handful of results concerning the independence number of generalized
Paley Graphs, including a generalization of a claim of Ruzsa, which
helps with understanding the limit of the method.
\end{abstract}

\maketitle

\section{Introduction}

In 1978 S\'{a}rk\"{o}zy proved that if $A\subset\mathbb{Z}$ is a
dense set of integers then it contains a $k^{th}$-power difference
for any $k\geq2$ \cite{Sarkozy1978SarkozysTheorem}. For a set $A\subset\{1,\dots,N\}$
that avoids $k^{th}$-power differences the best quantitative upper
bound, due to Pintz, Steiger, and Szemer\'{e}di \cite{PintzSteigerSzemeredi1988BestBoundsSarkozysTheorem},
takes the form 
\[
|A|\leq N^{1-o(1)},
\]
and the best lower bound, due to Ruzsa \cite{Ruzsa1984DifferenceSetsWithoutSquares}
(improved in some cases by Lewko \cite{Lewko2015ImprovedSarkozyLowerBounds}),
is of the form $N^{1-c}$ where $c\leq\frac{1-\delta}{k}$ for some
small fixed $\delta>0$. Following recent advances in the polynomial
method \cite{CrootLevPachZ4,EllenbergGijswijtCapsets,TaosBlogCapsets},
Green \cite{Green2017SarkozyTheoremInFunctionFields} gave a strong
quantitative result for the function field analogue of S\'{a}rk\"{o}zy's
theorem. Let $P_{q,n}$ denote the space of polynomials in the variable
$T$ over $\mathbb{F}_{q}$ of degree $<n$. For $k\geq2$, Green
proved that any subset $A\subset P_{q,n}$ that does not contain two
distinct polynomials $u(T),v(T)$ such that $u(T)-v(T)=b(T)^{k}$
for some $b\in\mathbb{F}_{q}[T]$ has size at most 
\[
|A|\leq q^{n(1-c_{k})}
\]
where
\[
c_{k}=\frac{1}{2k^{2}D_{q}(k)^{2}\log q}
\]
and $D_{q}(k)$ denotes the sum of digits of $k$ in base $q$. In
this paper, by adapting Ruzsa's construction in $\mathbb{Z}$, we
prove the following lower bound for the function field setting:
\begin{thm}
\label{thm:sarkozy_kth_power_lower}Let $k\geq2$, and suppose that
$\gcd(k,q-1)>1$. For $n\equiv0\pmod{2k}$ there exists a set $A\subset P_{q,n}$
of size 
\[
|A|=q^{n\left(1-\frac{1}{2k}\right)}
\]
 that does not contain a $k^{th}$ power difference. 
\end{thm}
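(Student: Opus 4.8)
\section*{Proof proposal}

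The plan is to imitate Ruzsa's digit construction, replacing residues modulo a prime by residues modulo an irreducible quadratic. Fix an irreducible $P\in\mathbb{F}_{q}[T]$ with $\deg P=2$ (such a $P$ exists for every $q$), write $m=n/2$, and expand every $u\in P_{q,n}$ in base $P$ as $u=\sum_{j=0}^{m-1}c_{j}(u)P^{j}$ with $\deg c_{j}(u)<2$; reduction mod $P$ identifies each digit with an element $\overline{c_{j}(u)}$ of the field $R:=\mathbb{F}_{q}[T]/(P)\cong\mathbb{F}_{q^{2}}$. The elementary fact driving everything is: if $u,v\in P_{q,n}$ are distinct with $u-v=b(T)^{k}$ and $j_{0}:=v_{P}(u-v)$ is the index of the first digit in which $u$ and $v$ differ, then, since $v_{P}(b^{k})=k\,v_{P}(b)$, we must have $k\mid j_{0}$; moreover $(u-v)/P^{j_{0}}=(b/P^{v_{P}(b)})^{k}$ is again a $k$-th power and is a $P$-adic unit, so reducing mod $P$ shows that $\overline{c_{j_{0}}(u)}-\overline{c_{j_{0}}(v)}$ is a nonzero $k$-th power in $R^{\times}$. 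The hypothesis $\gcd(k,q-1)>1$ forces $\gcd(k,q^{2}-1)>1$, so the $k$-th powers form a proper subgroup $H:=(R^{\times})^{k}$ of $R^{\times}$.

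Given this, I would build $A$ as follows. Let $J=\{\,j:0\le j<m,\ k\mid j\,\}$; since $2k\mid n$ we have $|J|=m/k=n/(2k)$. Let $B\subseteq R$ be any set with $(B-B)\cap H=\emptyset$ — equivalently, an independent set in the generalized Paley graph $\mathrm{Cay}(R,H)$ — and put
\[
A=\{\,u\in P_{q,n}:\overline{c_{j}(u)}\in B\text{ for all }j\in J\,\}.
\]
If $u,v\in A$ are distinct with $u-v=b^{k}$, then $k\mid v_{P}(u-v)=:j_{0}<m$, so $j_{0}\in J$, hence $\overline{c_{j_{0}}(u)},\overline{c_{j_{0}}(v)}\in B$ are distinct and their difference lies in $(B-B)\cap H=\emptyset$ by the observation above, a contradiction. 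Thus $A$ has no $k$-th power difference. The $|J|=n/(2k)$ restricted digits contribute $|B|$ choices each and the remaining $m-|J|$ digits contribute $|R|=q^{2}$ choices each, so
\[
|A|=|B|^{\,n/(2k)}\cdot(q^{2})^{\,n/2-n/(2k)}=|B|^{\,n/(2k)}\,q^{\,n-n/k},
\]
which equals $q^{\,n(1-1/(2k))}$ exactly when $|B|=q$.

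The theorem thus reduces to producing an independent set of size exactly $q$ in $\mathrm{Cay}(\mathbb{F}_{q^{2}},H)$, $H=(\mathbb{F}_{q^{2}}^{\times})^{k}$, and this is where I expect the real work to lie. When $\mathbb{F}_{q}^{\times}\subseteq H$ — which holds, for instance, whenever $k\mid q+1$, and in particular for $k=2$ — it is immediate: pick $\alpha\in\mathbb{F}_{q^{2}}^{\times}\setminus H$ and take $B=\alpha\,\mathbb{F}_{q}$, a one-dimensional $\mathbb{F}_{q}$-subspace; then $B-B=\alpha\mathbb{F}_{q}$ and every nonzero element has the form $\alpha c$ with $c\in\mathbb{F}_{q}^{\times}\subseteq H$, hence lies outside $H$ since $\alpha\notin H$, while $|B|=q$. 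In general, however, $\mathbb{F}_{q}^{\times}\not\subseteq H$ (e.g.\ $k=3$, $q\equiv1\pmod 3$), no scaled copy of $\mathbb{F}_{q}$ works, and one needs the generalization of Ruzsa's claim on independence numbers of generalized Paley graphs announced in the introduction to supply the required set. I would therefore isolate the graph-theoretic statement — ``$\mathrm{Cay}(\mathbb{F}_{q^{2}},(\mathbb{F}_{q^{2}}^{\times})^{k})$ has an independent set of size $q$ whenever $\gcd(k,q-1)>1$'' — as a lemma, prove it first (the main obstacle), and then the construction and count above close the argument.
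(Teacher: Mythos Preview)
Your reduction to a base-$P$ digit problem is correct, but the lemma you isolate at the end --- that $\Paley_k(\mathbb{F}_{q^2})$ admits an independent set of size $q$ whenever $\gcd(k,q-1)>1$ --- is a genuine obstacle, and nothing in the paper supplies it. The ``generalization of Ruzsa's claim'' referred to in the introduction is Theorem~\ref{thm:Ruzsa_generalization_odd}, which is an \emph{upper} bound on independence numbers and so cannot furnish the set you need. Your own analysis already shows the difficulty: the one-line construction $B=\alpha\mathbb{F}_q$ works exactly when $\gcd(k,q+1)>1$, a condition independent of the hypothesis $\gcd(k,q-1)>1$. For $k=3$, $q=7$ one has $\gcd(3,6)=3$ but $\gcd(3,8)=1$; concretely, $\mathbb{F}_7^{\times}$ surjects onto $\mathbb{F}_{49}^{\times}/(\mathbb{F}_{49}^{\times})^3$, so every line $\alpha\mathbb{F}_7$ contains a nonzero cube and no line is independent. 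Whether some other independent set of size $7$ exists in $\Paley_3(\mathbb{F}_{49})$ is not clear, and you give no argument for it.

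The paper avoids this question entirely by working in the monomial basis and exploiting \emph{two} coefficients of $b^k$ rather than one digit. If $b$ has lowest and highest nonzero coefficients $c_\ell,c_m$, then the lowest and highest nonzero coefficients of $b^k$ are $c_\ell^{\,k}$ and $c_m^{\,k}$, both $k$-th powers in $\mathbb{F}_q$. Pairing the coefficient at $T^{i}$ with that at $T^{\,n-k-i}$ for each $i\equiv 0\pmod k$ with $i<n/2$, the problem reduces to finding an independent set of size $q$ not in $\Paley_k(\mathbb{F}_{q^2})$ but in the strong product $\Paley_k(\mathbb{F}_q)\boxtimes\Paley_k(\mathbb{F}_q)$. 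These graphs have the same number of vertices but are quite different, and in the product the required set is immediate (Proposition~\ref{prop:rk_2_lower_bound}): take $\{(x,\beta x):x\in\mathbb{F}_q\}$ for any non-$k$-th-power $\beta\in\mathbb{F}_q$, since $x-y$ and $\beta(x-y)$ can never both be $k$-th powers. In short, the missing idea in your plan is to use the top \emph{and} bottom of $b^k$ simultaneously, which replaces the field $\mathbb{F}_{q^2}$ by the ring $\mathbb{F}_q\times\mathbb{F}_q$ and turns the hard lemma into a triviality.
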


The construction results in a bound for all $n$, since $P_{q,n}\subset P_{q,n+1}$.
In section \ref{sec:Generalized-Paley-Graphs}, we prove a handful
of results concerning the independence number of generalized Paley
graphs, including a generalization of a claim of Ruzsa. We prove a
basic lower bound for the independence number of a product, which
is ingredient in the proof of Theorem \ref{thm:sarkozy_kth_power_lower},
and we also prove upper bounds to help understand the limits of the
method. For $k\geq3$, Theorem \ref{thm:sarkozy_kth_power_lower},
could potentially be improved by finding larger independent sets in
products of generalized Paley graphs. In subsection \ref{subsec:Limits-of-the-method}
we give a case with an improved bound, and discuss the limit of the
method. When $k=2$, we conjecture that the lower bound achieved,
$q^{\frac{3}{4}n}$, is optimal.
\begin{rem}
For $\gcd(k,q-1)=1$, every element of $\mathbb{F}_{q}$ is a $k^{th}$
power, and it is not always possible to obtain a lower bound as strong
as Theorem \ref{thm:sarkozy_kth_power_lower}. For $k=q^{r}$, a pigeonhole
argument achieves the upper bound
\[
|A|\leq q^{n-\lfloor\frac{n-1}{k}\rfloor}.
\]
For any $k$, there are at $q^{1+\lfloor\frac{n-1}{k}\rfloor}$ distinct
$k^{th}$ powers with degree $\leq n-1$, and so the greedy construction
yields a set $A$ with no $k^{th}$ power differences of size
\begin{equation}
|A|=q^{n-1-\lfloor\frac{n-1}{k}\rfloor},\label{eq:greedy_lower_bound}
\end{equation}
and hence in the case of $k=q^{r}$, this upper bound is tight within
a factor of $q$.
\end{rem}

\subsection{Graph Theoretic Notation}

We will use the following graph theoretic notation throughout the
paper: For a directed graph $G=(V,E)$, let $\alpha(G)$ denote the
size of the largest independent set, that is the largest set such
that no two vertices are connected by a directed edge
\[
\alpha(G)=\max_{W\subset V}\left\{ |W|:\ \forall\ x,y\in W,\ (x,y)\notin E\right\} .
\]
We let $\omega(G)$ denote the clique number, which equals $\alpha(\overline{G})$
where $\overline{G}$ is the complement graph. Here the complement
graph is $\overline{G}=(V,\overline{E})$ where $\overline{E}=\left\{ (x,y)|\ (x,y)\in V^{2},\ x\neq y,\ (x,y)\notin E\right\} $.

\subsubsection*{The Strong Graph Product:}

Given two graphs $G=(V,E)$, $G=(V',E')$, the strong graph product\emph{
}of $G$ and $G'$ is the graph $G\boxtimes G'$ with vertex set $V\times V'$,
and edge set defined by connecting $(v,v'),\ (u,u')\in V\times V'$
if $(v,u)\in E$ and $(v',u')\in E'$, or $(v,u)\in E$ and $v'=u'$,
or $v=u$ and $(v',u')\in E'$. The strong graph product as defined
applies to both directed and undirected graphs.

\subsubsection*{Shannon Capacity:}

For a graph $G$ and $n\geq2$, we let $G^{\boxtimes n}$ denote the
$n$-fold strong graph product of $G$ with itself. The Shannon Capacity
of an undirected graph $G$ is defined to be
\[
\Theta(G)=\limsup_{n\rightarrow\infty}\left(\alpha(G^{\boxtimes n})\right)^{\frac{1}{n}}.
\]

\subsubsection*{Lov\'{a}sz Theta Function:}

For an undirected graph $G$, the Lov\'{a}sz Theta Function, $\vartheta(G)$,
is a minimization over orthonormal representations of $G$, see \cite{Lovasz1979ShannonCapacityOfAGraph}
for a precise definition. In particular, for undirected $G,H$, $\vartheta$
satisfies
\begin{align}
\Theta(G) & \leq\vartheta(G),\label{eq:Lovasz_Shannon_bound}\\
\vartheta(G\boxtimes H) & =\vartheta(G)\vartheta(H),
\end{align}
and if $G$ is vertex transitive, then 
\begin{equation}
\vartheta(G)\vartheta(\overline{G})=|G|.\label{eq:lovasz_vertex_transitive_product_property}
\end{equation}

\section{Generalized Paley Graphs\label{sec:Generalized-Paley-Graphs}}

Generalized Paley Graphs were introduced by Cohen \cite{Cohen1988CliqueNumberOfPaleyGraphs}
and reintroduced by Lim and Praeger \cite{LimPraeger2009GeneralizedPaleyGraphs}.
We expand their definition slightly to allow for the vertex set to
be a ring rather than just a field, as this will be relevant later
(see Theorem \ref{thm:Ruzsa_generalization_odd}).
\begin{defn}
\label{def:Generalized_Paley_Graph_Definition}For a finite commutative
ring $R$, let $\Paley_{k}\left(R\right)$ be the (possibly directed)
graph with vertex set $V=R$, and edge set 
\[
E=\left\{ (x,y):\ x-y=z^{k}\text{ for some }z\in R\right\} .
\]
\end{defn}

With this notation, for $q\equiv1\pmod{4}$, $\Paley_{2}(\mathbb{F}_{q})$
is the usual Paley graph. The graph $\Paley_{k}(\mathbb{F}_{q})$,
where two elements are connected if they differ by a $k^{th}$ power,
is undirected if and only if $\frac{q-1}{\gcd(q-1,k)}$ is even, since
in this case $-1$ is a $k^{th}$ power. If $\gcd(k,q-1)=1$, then
every element of $\mathbb{F}_{q}$ is a $k^{th}$ power, and so $\Paley_{k}(\mathbb{F}_{q})$
is complete graph on $q$ vertices, and if $\gcd(k,q-1)=d<k$, then
replacing $k$ with $d$ does not change the graph. The assumption
$q\equiv1\pmod{2k}$ is often used as it assures both that the graph
is undirected and that $k^{th}$ powers are indeed relevant. 

Ruzsa \cite{Ruzsa1984DifferenceSetsWithoutSquares} gave a lower bound
for S\'{a}rk\"{o}zy's theorem in $\mathbb{Z}$ based on a maximization
involving the quantity
\[
\alpha\left(\Paley_{k}\left(\mathbb{Z}/m\mathbb{Z}\right)\right)
\]
 for squarefree $m$. In Section \ref{sec:The-Lower-Bound-Construction}
we give a lower bound for the function field case in terms of the
independence number of a product of this graph. Our results are most
naturally stated in terms of the independence number of products of
generalized Payley graphs, and so we define 

\begin{equation}
r_{k,n}(R)=\begin{cases}
\alpha\left(\Paley_{k}(R)\right) & \text{ when }n=1\\
\alpha\left(\Paley_{k}(R)^{\boxtimes n}\right) & \text{ when }n\geq2
\end{cases}.\label{eq:r_k_definition}
\end{equation}
For products of rings, these graphs can be factored. For composite
$m$, we have the following lemma:
\begin{lem}
\label{lem:paley_direct_product}Let $n,m>1$ be relatively prime.
Then 
\[
\Paley_{k}(\mathbb{Z}/mn\mathbb{Z})=\Paley_{k}(\mathbb{Z}/m\mathbb{Z})\boxtimes\Paley_{k}(\mathbb{Z}/n\mathbb{Z}).
\]
\end{lem}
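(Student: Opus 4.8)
The plan is to identify the two graphs $\Paley_k(\mathbb{Z}/mn\mathbb{Z})$ and $\Paley_k(\mathbb{Z}/m\mathbb{Z})\boxtimes\Paley_k(\mathbb{Z}/n\mathbb{Z})$ via the Chinese Remainder Theorem. First I would recall that the ring isomorphism $\phi\colon \mathbb{Z}/mn\mathbb{Z}\xrightarrow{\ \sim\ }\mathbb{Z}/m\mathbb{Z}\times\mathbb{Z}/n\mathbb{Z}$ given by $x\mapsto(x\bmod m,\,x\bmod n)$ is a bijection on the common vertex set $\mathbb{Z}/mn\mathbb{Z}\leftrightarrow (\mathbb{Z}/m\mathbb{Z})\times(\mathbb{Z}/n\mathbb{Z})$. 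So it suffices to check that $\phi$ carries edges to edges and non-edges to non-edges, i.e.\ that for $x,y\in\mathbb{Z}/mn\mathbb{Z}$, the difference $x-y$ is a $k^{th}$ power mod $mn$ if and only if $x-y$ is a $k^{th}$ power mod $m$ \emph{and} $x-y$ is a $k^{th}$ power mod $n$. Note the strong product's edge condition, when restricted to a fixed difference, exactly says ``$k^{th}$ power in the first coordinate'' together with ``$k^{th}$ power in the second coordinate'' (the three cases in the definition of $\boxtimes$ collapse, since ``$x=u$ in $\mathbb{Z}/m\mathbb{Z}$'' is the same as ``$x-u=0=0^k$ is a $k^{th}$ power''; here I should make that remark explicit so the loops/diagonal cases are handled uniformly).

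The key arithmetic step is then the claim: an element $a\in\mathbb{Z}/mn\mathbb{Z}$ is a $k^{th}$ power if and only if its images $a\bmod m$ and $a\bmod n$ are each $k^{th}$ powers. The forward direction is immediate: if $a=z^k$ in $\mathbb{Z}/mn\mathbb{Z}$ then reducing mod $m$ and mod $n$ shows $a$ is a $k^{th}$ power in each. For the converse, suppose $a\equiv b^k\pmod m$ and $a\equiv c^k\pmod n$. Since $\gcd(m,n)=1$, CRT gives a $z\in\mathbb{Z}/mn\mathbb{Z}$ with $z\equiv b\pmod m$ and $z\equiv c\pmod n$; then $z^k\equiv b^k\equiv a\pmod m$ and $z^k\equiv c^k\equiv a\pmod n$, so $z^k\equiv a\pmod{mn}$ again by CRT (uniqueness). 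Applying this with $a=x-y$ completes the edge-preservation check in both directions, and since $\phi$ is a bijection of vertex sets preserving adjacency and non-adjacency, it is a graph isomorphism.

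I do not expect a genuine obstacle here; the whole content is the coprimality hypothesis feeding CRT. The only points requiring a little care are bookkeeping rather than mathematics: (i) making sure the degenerate cases in the strong-product edge definition (where one coordinate is equal and the other is an edge, or both are equal) are subsumed by treating $0$ as the $k^{th}$ power $0^k$, so that ``$(x_1-y_1,\,x_2-y_2)$ componentwise a $k^{th}$ power'' is literally the adjacency relation of $\Paley_k(\mathbb{Z}/m\mathbb{Z})\boxtimes\Paley_k(\mathbb{Z}/n\mathbb{Z})$; and (ii) if one insists on the graphs being directed, noting that the argument never used symmetry, so it applies verbatim in the directed case. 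A remark worth inserting afterwards is that by induction this immediately factors $\Paley_k(\mathbb{Z}/m\mathbb{Z})$ over the prime-power decomposition of $m$, which is the form in which it will be used for squarefree $m$ together with \eqref{eq:r_k_definition}.
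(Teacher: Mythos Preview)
Your proposal is correct and follows essentially the same approach as the paper: both proofs rest on the Chinese Remainder Theorem together with the observation that an element of $\mathbb{Z}/mn\mathbb{Z}$ is a $k^{th}$ power if and only if its reductions mod $m$ and mod $n$ are. The paper states this in a single sentence, while you spell out both directions of the $k^{th}$-power equivalence and the bookkeeping for the strong-product edge relation, but the mathematical content is the same.
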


\begin{proof}
This follows from the Chinese Remainder Theorem and the fact that
an element is a $k^{th}$-power in $\mathbb{Z}/mn\mathbb{Z}$ if and
only if it maps to a $k^{th}$ power in both $\mathbb{Z}/m\mathbb{Z}$
and $\mathbb{Z}/n\mathbb{Z}$.
\end{proof}

\subsection{The Independence Number\label{subsec:The-Independence-Number-general-payley}}

For $R=\mathbb{F}_{q}$, the clique number\emph{ }of the generalized
Payley graph is a well studied quantity. See Yip's Masters Thesis
\cite{Yip2021ThesisCliquesInGeneralizedPaleyGraphs,Yip2022CliqueNumberPrimePower}
for a discussion of lower and upper bounds. In particular, when the
graph is undirected, the best lower bound is due to Cohen \cite{Cohen1988CliqueNumberOfPaleyGraphs}
\begin{equation}
\frac{p}{(p-1)\log d}\left(\frac{1}{2}\log q-2\log\log q\right)-1\leq\omega\left(\Paley_{k}(\mathbb{F}_{q})\right)=\alpha\left(\overline{\Paley_{k}(\mathbb{F}_{q})}\right).\label{eq:cohen_lower_bound}
\end{equation}
 When $k=2$, the graph $\Paley_{k}(\mathbb{F}_{q})$ is self-complementary,
and so $\omega(\Paley_{2}(\mathbb{F}_{q}))=\alpha(\Paley_{2}(\mathbb{F}_{q}))$
but for $k>2$, $\Paley_{k}(\mathbb{F}_{q})$ is isomorphic to a subgraph
of $\overline{\Paley_{k}(\mathbb{F}_{q})}$, and hence
\begin{equation}
\omega\left(\Paley_{k}(\mathbb{F}_{q})\right)=\alpha\left(\overline{\Paley_{k}(\mathbb{F}_{q})}\right)\leq\alpha\left(\Paley_{k}(\mathbb{F}_{q})\right),\label{eq:independence_number_complement_bound}
\end{equation}
that is the lower bounds for the clique number imply lower bounds
for the independence number. In some unique cases, such as when $q=p^{s}$
and $k|\frac{q-1}{p^{r}-1}$ for some $r|s$, there are significantly
stronger lower bounds for the size of the clique in $\Paley_{k}(\mathbb{F}_{q})$,
see \cite{BroereDomanRidley1988largeCliquesInSomePayleyGraphs,Yip2021ThesisCliquesInGeneralizedPaleyGraphs}
for more details. The upper bound for the clique number is $\sqrt{q}$,
and for $k=2$ this was recently improved in \cite{HansonPetridis2021CliqueNumberOfPaleyGraphs}.

The case for the independence number is different however, as in some
cases it grows above $\sqrt{q}$ for larger $k$. In the first non-trivial
case, when $k=3$ and $q=7$, the Paley Graph is precisely the $7$-cycle,
and we have an independent set of size $3=7^{0.5645...}$. Indeed,
if $q=1+2k$, then the only $k^{th}$ powers in $\mathbb{F}_{q}$
are $-1,0,1$, and hence $\Paley_{k}(\mathbb{F}_{q})$ is isomorphic
to $C_{2k+1}$, the $(2k+1)$-cycle. This contains an independent
set of size $k$, and so there are infinitely many graphs satisfying
\begin{equation}
q^{1-\frac{\log(2)}{\log(2k+1)}}\leq\alpha\left(\Paley_{k}\left(\mathbb{F}_{q}\right)\right).\label{eq:sparse_lower_bound}
\end{equation}
The following theorem gives a basic upper bound for the independence
number of these graphs.
\begin{thm}
\label{thm:payley_upper_bound}Suppose that $-1$ is a $k^{th}$ power
in $\mathbb{F}_{q}$. Then we have that
\begin{equation}
\vartheta\left(\Paley_{k}(\mathbb{F}_{q})\right)\leq q^{1-\frac{1}{k}}\label{eq:lovasz_theta_bound}
\end{equation}
where $\vartheta$ is the Lov\'{a}sz Theta Function.
\end{thm}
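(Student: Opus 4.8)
The plan is to bound $\vartheta\left(\Paley_{k}(\mathbb{F}_{q})\right)$ by exhibiting an explicit feasible solution to one of the standard formulations of the Lov\'{a}sz theta function — either an orthonormal representation of the complement together with a handle, or more conveniently the eigenvalue/dual formulation. Since $\Paley_{k}(\mathbb{F}_{q})$ is a Cayley graph on the abelian group $(\mathbb{F}_{q},+)$ with connection set $S=\{z^{k}:z\in\mathbb{F}_{q}^{\times}\}$ (the nonzero $k^{th}$ powers, which form a subgroup of index $d=\gcd(k,q-1)$ and which is symmetric because $-1\in S$ by hypothesis), it is vertex-transitive, and its adjacency matrix is diagonalized by the additive characters $\psi$ of $\mathbb{F}_{q}$. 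The eigenvalues are the Gauss-type sums $\lambda_{\psi}=\sum_{s\in S}\psi(s)$, and $\lambda_{\psi_{0}}=|S|=\frac{q-1}{d}$ for the trivial character. For a vertex-transitive graph, the Hoffman-type bound gives $\vartheta(G)\leq \frac{-\lambda_{\min}}{\lambda_{\max}-\lambda_{\min}}\,|V|$ where $\lambda_{\max}=|S|$; so the whole computation reduces to a lower bound on the smallest nontrivial character sum $\lambda_{\psi}$.

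The key input is that for a nontrivial additive character $\psi$, the sum over the $k^{th}$-power subgroup can be rewritten using multiplicative characters of order dividing $d$: writing $\mathbf{1}_{S}(x)=\frac{1}{d}\sum_{\chi:\chi^{d}=1}\chi(x)$ for $x\neq 0$, one gets $\lambda_{\psi}=-\frac{1}{d}+\frac{1}{d}\sum_{\chi^{d}=1}g(\chi,\psi)$ where $g(\chi,\psi)$ is a Gauss sum, and $g(\chi_{0},\psi)=-1$, so $\lambda_{\psi}=-\frac{1}{d}+\frac{1}{d}\sum_{\chi\neq\chi_0,\ \chi^{d}=1}g(\chi,\psi)-\frac{1}{d}$, with each nontrivial Gauss sum of absolute value exactly $\sqrt{q}$. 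This yields $|\lambda_{\psi}+1|\le \frac{d-1}{d}\sqrt{q}$, hence $\lambda_{\psi}\geq -1-\frac{d-1}{d}\sqrt{q}$, and in particular $-\lambda_{\min}\le 1+\frac{d-1}{d}\sqrt{q}$. Plugging into the Hoffman bound with $\lambda_{\max}=\frac{q-1}{d}$ gives
\[
\vartheta\left(\Paley_{k}(\mathbb{F}_{q})\right)\le\frac{1+\frac{d-1}{d}\sqrt{q}}{\frac{q-1}{d}+1+\frac{d-1}{d}\sqrt{q}}\,q=\frac{d+(d-1)\sqrt{q}}{q-1+d+(d-1)\sqrt{q}}\,q,
\]
and one checks this is $\le q^{1-1/k}$; since replacing $k$ by $d=\gcd(k,q-1)$ does not change the graph and $d\le k$ gives $q^{1-1/d}\le q^{1-1/k}$, it suffices to prove the bound with $d$ in place of $k$, i.e. to verify $\frac{d+(d-1)\sqrt q}{q-1+d+(d-1)\sqrt q}\,q\le q^{1-1/d}$, equivalently $q^{1/d}\big(d+(d-1)\sqrt q\big)\le q-1+d+(d-1)\sqrt q$. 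For $d\ge 2$ the dominant term on the right is $q$ and on the left is $(d-1)q^{1/2+1/d}$, so the inequality holds once $q$ is not too small; the boundary cases ($d=2$, and small $q$ with $\gcd(k,q-1)=d$) need to be checked by hand, possibly sharpening the Gauss-sum estimate (e.g. using that for $d=2$ the single nontrivial Gauss sum has a known sign/value, giving $\lambda_{\min}=\frac{-1-\sqrt q}{2}$ exactly for Paley graphs).

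I expect the main obstacle to be precisely this last elementary but fiddly verification that the Hoffman/Gauss-sum bound is actually $\le q^{1-1/k}$ uniformly — the two bounds are genuinely close (both are roughly $q^{1-1/2}$-ish for small $d$ versus $q^{1-1/k}$), so some care with the inequality $q^{1/d}\bigl(d+(d-1)\sqrt q\bigr)\le q-1+d+(d-1)\sqrt q$ is needed, and one may have to treat $d=2$ separately using the exact value of the quadratic Gauss sum rather than just its modulus. A cleaner alternative, which I would try first, is to use the formulation $\vartheta(G)\le \vartheta'(G)$ via the complement: since $\Paley_{k}(\mathbb{F}_q)$ is vertex-transitive, $\vartheta(G)\vartheta(\overline G)=q$ by \eqref{eq:lovasz_vertex_transitive_product_property}, so the claim is equivalent to $\vartheta(\overline{\Paley_k(\mathbb{F}_q)})\ge q^{1/k}$; and $\vartheta(\overline G)\ge \alpha(\overline G)=\omega(\Paley_k(\mathbb{F}_q))$, but the known clique lower bounds \eqref{eq:cohen_lower_bound} are only logarithmic, so this route does not immediately work and one must bound $\vartheta(\overline G)$ directly — which again lands on the same character-sum computation. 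So the Hoffman-bound route above is the one to push through, with the final inequality check being the only real work.
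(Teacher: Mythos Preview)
Your approach via the Hoffman/ratio bound and Gauss sums is genuinely different from the paper's and, with a bit more work, does go through. The paper instead proves the equivalent lower bound $\vartheta(\overline{\Paley_k(\mathbb{F}_q)})\ge q^{1/k}$ and then invokes the vertex-transitive identity \eqref{eq:lovasz_vertex_transitive_product_property}. You considered exactly this reduction and abandoned it because $\alpha(\overline{G})=\omega(\Paley_k(\mathbb{F}_q))$ is only logarithmic; the trick you missed is to lower-bound $\vartheta(\overline G)$ not by $\alpha(\overline G)$ but by $\alpha(\overline G^{\,\boxtimes k})^{1/k}$. The paper (Lemma~\ref{lem:shannon_capacity_lower_bound}) exhibits the explicit independent set $\{(x,\beta x,\dots,\beta^{k-1}x):x\in\mathbb{F}_q\}$ in $\overline{\Paley_k(\mathbb{F}_q)}^{\boxtimes k}$, where $\beta$ generates $\mathbb{F}_q^\times$: for distinct $x,y$ some $\beta^{j}(x-y)$ is a $k^{th}$ power, so the two tuples are non-adjacent in the strong product of complements. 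This gives $\vartheta(\overline G)\ge q^{1/k}$ in one line, with no character sums and no inequality to verify.

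Regarding your route: there is a small arithmetic slip (one has $d\lambda_\psi+1=\sum_{\chi\ne\chi_0}g(\chi,\psi)$, so $-\lambda_{\min}\le\frac{1+(d-1)\sqrt q}{d}$, not $1+\frac{d-1}{d}\sqrt q$), and with the corrected constant the ``fiddly'' inequality you were worried about is in fact clean. Writing $t=q^{1/d}$, the claim $q^{1/d}(1+(d-1)\sqrt q)\le q+(d-1)\sqrt q$ reduces to $(d-1)t^{d/2}\le t+t^2+\cdots+t^{d-1}$, which is immediate from the AM--GM inequality applied to the $d-1$ terms on the right. So your spectral argument can be completed uniformly, without treating $d=2$ or small $q$ separately; it just costs the Gauss-sum machinery that the paper's combinatorial construction avoids entirely.
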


In particular, by (\ref{eq:Lovasz_Shannon_bound}) and the definition
of $\Theta$ as a limsup, it follows that

\begin{equation}
r_{k,2}(\mathbb{F}_{q})\leq q^{2\left(1-\frac{1}{k}\right)}.\label{eq:simple_r_k_2_upper_bound}
\end{equation}
To prove this theorem, we need to prove the following fact about the
complement graph:
\begin{lem}
\label{lem:shannon_capacity_lower_bound}We have that 
\[
\alpha\left(\overline{\Paley_{k}(\mathbb{F}_{q})}^{\boxtimes k}\right)\geq q,
\]
and hence when the graph is undirected 
\begin{equation}
\vartheta\left(\overline{\Paley_{k}(\mathbb{F}_{q})}\right)\geq q^{\frac{1}{k}}.\label{eq:lower_bound_lovasz_theta}
\end{equation}
\end{lem}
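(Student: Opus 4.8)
The plan is to exhibit an explicit independent set of size $q$ in $\overline{\Paley_k(\mathbb{F}_q)}^{\boxtimes k}$, and then to read off the bound on $\vartheta$ from the definition of $\Theta$ together with the inequality $\Theta\le\vartheta$ in (\ref{eq:Lovasz_Shannon_bound}). The first step is to unwind the meaning of independence. Expanding the recursive definition of the strong product, a subset $S\subseteq\mathbb{F}_q^{k}$ is independent in $\overline{\Paley_k(\mathbb{F}_q)}^{\boxtimes k}$ precisely when for any two distinct tuples $x,y\in S$ there is a coordinate $i$ with $x_i\neq y_i$ and $x_i-y_i$ a \emph{nonzero $k^{th}$ power} (in the directed case one also needs, symmetrically, a coordinate with $y_i-x_i$ a nonzero $k^{th}$ power). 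So the task becomes: find $q$ tuples, pairwise differing in some coordinate by a nonzero $k^{th}$ power.

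\textbf{The construction.} Fix a generator $g$ of $\mathbb{F}_q^{\times}$ and set
\[
S=\bigl\{\,(t,gt,g^{2}t,\dots,g^{k-1}t):t\in\mathbb{F}_q\,\bigr\}.
\]
The first coordinate recovers $t$, so $|S|=q$. For $t\neq s$ the difference of the corresponding tuples is $(u,gu,g^{2}u,\dots,g^{k-1}u)$ with $u=t-s\neq0$. Put $d=\gcd(k,q-1)$; the nonzero $k^{th}$ powers form the index-$d$ subgroup $H=\langle g^{d}\rangle$ of $\mathbb{F}_q^{\times}$, and $u$ lies in some coset $g^{j}H$. Since $d\mid k$, as $i$ runs over $\{0,1,\dots,k-1\}$ the residue $i\bmod d$ attains every value, so for a suitable $i$ we have $g^{i}u\in H$, i.e.\ the $i^{th}$ coordinate difference $g^{i}u$ is a nonzero $k^{th}$ power. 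Running the same count with $-u$ in place of $u$ supplies a coordinate witnessing the reverse direction, so $S$ is independent even in the directed graph. Hence $\alpha\!\left(\overline{\Paley_k(\mathbb{F}_q)}^{\boxtimes k}\right)\ge q$.

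\textbf{From $\alpha$ to $\vartheta$.} Assume now the graph is undirected. Independence numbers are supermultiplicative under the strong product (the direct product of two independent sets is independent), so $\alpha\!\left(\overline{\Paley_k(\mathbb{F}_q)}^{\boxtimes mk}\right)\ge q^{m}$ for every $m\ge1$, whence $\alpha\!\left(\overline{\Paley_k(\mathbb{F}_q)}^{\boxtimes mk}\right)^{1/(mk)}\ge q^{1/k}$ and therefore $\Theta\!\left(\overline{\Paley_k(\mathbb{F}_q)}\right)\ge q^{1/k}$ straight from the $\limsup$ definition; combining with (\ref{eq:Lovasz_Shannon_bound}) gives $\vartheta\!\left(\overline{\Paley_k(\mathbb{F}_q)}\right)\ge q^{1/k}$. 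The only genuinely delicate point is guessing the right set $S$: once one sees that scaling a fixed nonzero $u$ by the consecutive powers $1,g,\dots,g^{k-1}$ is forced to meet the subgroup $H$ of $k^{th}$ powers, everything else — the description of independence in the strong product, supermultiplicativity of $\alpha$, and the passage to $\vartheta$ — is routine bookkeeping, and the directed case costs nothing extra.
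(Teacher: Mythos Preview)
Your proof is correct and follows essentially the same approach as the paper: the key independent set is exactly the paper's set $A=\{(x,\beta x,\dots,\beta^{k-1}x):x\in\mathbb{F}_q\}$ with $\beta$ a generator, and the verification that some coordinate difference is a nonzero $k^{th}$ power is the same coset argument. Minor differences are cosmetic: you handle the case $\gcd(k,q-1)=1$ uniformly rather than separately, you make the directed case explicit, and you pass to $\vartheta$ via $\Theta$ and (\ref{eq:Lovasz_Shannon_bound}) whereas the paper invokes $\alpha\le\vartheta$ together with multiplicativity of $\vartheta$ directly---both routes are standard and equivalent here.
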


\begin{proof}
If $\gcd(k,q-1)=1$, then $\overline{\Paley_{k}(\mathbb{F}_{q})}$
is the totally isolated graph, the complement of the complete graph,
and so the result holds trivially. Assume that $\gcd(k,q-1)>1$, and
let $\beta\in\mathbb{F}_{q}$ be a cyclic generator of $\mathbb{F}_{q}^{*}$.
Consider the set 
\[
A=\left\{ (x,\beta x,\beta^{2}x,\dots,\beta^{k-1}x)|\ x\in\mathbb{F}_{q}\right\} .
\]
Let $x,y\in\mathbb{F}_{q}$ be two elements with $x-y\neq0$, and
write $(x-y)=\beta^{a}$ for some $a$. Then $(x-y)\beta^{j}$ will
be a $k^{th}$ power for the value of $j\in\{0,1,\dots,k-1\}$ satisfying
$j\equiv-a\pmod k$. This proves that $A$ is an independent set.
Equation (\ref{eq:lower_bound_lovasz_theta}) then follows from (\ref{eq:lovasz_theta_bound})
since the Lov\'{a}sz Theta Function upper bounds the size of the
largest independent set.
\end{proof}
\begin{proof}
(of Theorem \ref{thm:payley_upper_bound}). Since the Generalized
Payley graph is vertex transitive, and undirected since $-1$ is a
$k^{th}$ power, by (\ref{eq:lovasz_vertex_transitive_product_property})
\[
\vartheta\left(\Paley_{k}(\mathbb{F}_{q})\right)\vartheta\left(\overline{\Paley_{k}(\mathbb{F}_{q})}\right)=q,
\]
Equation (\ref{eq:lovasz_theta_bound}) then follows from this and
the lower bound in Lemma \ref{lem:shannon_capacity_lower_bound}.
\end{proof}
The following proposition gives a lower bound for the independence
number of a product, which will be used in the next section in the
proof of Theorem \ref{thm:sarkozy_kth_power_lower}.
\begin{prop}
\label{prop:rk_2_lower_bound}For $\gcd(k,q-1)>1$ we have that $r_{k,2}(\mathbb{F}_{q})\geq q.$
In particular, when $k=2$ and $q$ is odd, $r_{2,2}(\mathbb{F}_{q})=q$.
\end{prop}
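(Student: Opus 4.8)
The plan is to prove the lower bound by exhibiting an explicit independent set of size $q$ in $\Paley_{k}(\mathbb{F}_{q})^{\boxtimes2}$, mimicking the construction in the proof of Lemma~\ref{lem:shannon_capacity_lower_bound} but for the graph itself rather than its complement. Let $\beta$ be a generator of the cyclic group $\mathbb{F}_{q}^{*}$. Since $d:=\gcd(k,q-1)>1$, the set of nonzero $k^{th}$ powers is the proper subgroup $(\mathbb{F}_{q}^{*})^{d}$, and in particular $\beta$ is \emph{not} a $k^{th}$ power. Take
\[
S=\left\{ (x,\beta x):\ x\in\mathbb{F}_{q}\right\} \subseteq\mathbb{F}_{q}^{2},
\]
which has size $q$. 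I claim $S$ is independent. Given two distinct elements $(x,\beta x),(y,\beta y)$ of $S$, set $t=x-y\neq0$. By the definition of the strong product, a directed edge between these two vertices, in either direction, would require that in each coordinate the two entries are equal or differ by a $k^{th}$ power; since $t\neq0$, this forces either $t$ and $\beta t$ to both be $k^{th}$ powers, or $-t$ and $-\beta t$ to both be $k^{th}$ powers. In the first case $\beta=(\beta t)t^{-1}$ would be a $k^{th}$ power, and in the second $\beta=(-\beta t)(-t)^{-1}$ would be a $k^{th}$ power, each contradicting the choice of $\beta$. Hence no edge joins the two vertices, $S$ is independent, and $r_{k,2}(\mathbb{F}_{q})=\alpha(\Paley_{k}(\mathbb{F}_{q})^{\boxtimes2})\geq q$.

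For the second statement, take $k=2$ and $q$ odd, so that $\gcd(2,q-1)=2>1$ and the lower bound above applies; it remains to show $r_{2,2}(\mathbb{F}_{q})\leq q$, and here I would split into two cases. If $q\equiv1\pmod4$ then $-1$ is a square, so Theorem~\ref{thm:payley_upper_bound} applies and (\ref{eq:simple_r_k_2_upper_bound}) gives $r_{2,2}(\mathbb{F}_{q})\leq q^{2(1-1/2)}=q$. If $q\equiv3\pmod4$ then $-1$ is a nonsquare, so $\Paley_{2}(\mathbb{F}_{q})$ is a tournament: for every pair of distinct $x,y$, exactly one of $x-y$, $y-x$ is a nonzero square. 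Let $S$ be any independent set in $\Paley_{2}(\mathbb{F}_{q})^{\boxtimes2}$; if two elements of $S$ shared their first coordinate, say $(x_{1},x_{2})$ and $(x_{1},y_{2})$ with $x_{2}\neq y_{2}$, then whichever of $x_{2}-y_{2}$, $y_{2}-x_{2}$ is a square produces a directed edge between them in the strong product (the first coordinates being equal), contradicting independence. Thus the first coordinates of the elements of $S$ are pairwise distinct, so $|S|\leq q$. Combining the two bounds yields $r_{2,2}(\mathbb{F}_{q})=q$.

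The one genuine subtlety — the main obstacle — is that the clean upper bound coming from the Lov\'asz theta function in Theorem~\ref{thm:payley_upper_bound} is only available when $-1$ is a $k^{th}$ power, so the argument bifurcates at $q\bmod 4$ when $k=2$, and for $q\equiv3\pmod4$ one must fall back on the elementary tournament observation above. Everything else is routine unwinding of the strong product definition together with the elementary fact that a generator of $\mathbb{F}_{q}^{*}$ is never a $k^{th}$ power when $\gcd(k,q-1)>1$. I do not expect this method to produce a matching upper bound for $k>2$.
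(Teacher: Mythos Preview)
Your proof is correct, and the lower-bound construction $S=\{(x,\beta x):x\in\mathbb{F}_q\}$ is exactly the one the paper uses (the paper takes any non-$k^{th}$-power $\beta$ rather than a generator, but that is immaterial). For the upper bound when $k=2$, the paper simply invokes (\ref{eq:simple_r_k_2_upper_bound}); as you correctly observe, that inequality rests on Theorem~\ref{thm:payley_upper_bound} and hence on $-1$ being a square, so it only directly covers $q\equiv1\pmod4$. Your tournament argument for $q\equiv3\pmod4$ --- that in $\Paley_2(\mathbb{F}_q)^{\boxtimes2}$ any two vertices sharing a first coordinate are adjacent --- is valid and actually fills a small gap the paper's proof glosses over.
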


\begin{proof}
Since $\gcd(k,q-1)>1$, there exists $\beta\in\mathbb{F}_{q}$ that
is not a $k^{th}$ power. Then $A=\left\{ (x,\beta x)\ |x\in\mathbb{F}_{q}\right\} $
is an independent set in $\Paley_{k}(\mathbb{F}_{q})\boxtimes\Paley_{k}(\mathbb{F}_{q})$,
since for any distinct $x,y\in\mathbb{F}_{q}$, only one of $x-y$
and $\beta(x-y)$ can be a $k^{th}$ power. The final statement follows
from the upper bound \ref{eq:simple_r_k_2_upper_bound}.
\end{proof}
Proposition \ref{eq:lower_bound_lovasz_theta} and Theorem \ref{thm:payley_upper_bound}
together imply that 
\begin{equation}
q^{\frac{1}{2}}\leq\Theta\left(\Paley_{k}(\mathbb{F}_{q})\right)\leq q^{1-\frac{1}{k}}\label{eq:paley_shannon_capacity}
\end{equation}
and 
\begin{equation}
q^{\frac{1}{k}}\leq\Theta\left(\overline{\Paley_{k}(\mathbb{F}_{q})}\right)\leq q^{\frac{1}{2}}.\label{eq:paley_complement_shannon_capacity}
\end{equation}
For $k\geq3$, for prime fields, we believe that neither of these
inequalities are sharp.
\begin{conjecture}
For $k\geq3$, there exists $a_{k},b_{k}>0$ such that for any prime
$p\equiv1\pmod{k}$. 
\[
p^{\frac{1}{2}+a_{k}}\leq\Theta\left(\Paley_{k}(\mathbb{F}_{p})\right)\leq p^{1-\frac{1}{k}-b_{k}}.
\]
\end{conjecture}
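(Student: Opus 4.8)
\medskip

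The two halves of this conjecture stand on very different footing. The upper bound is within reach of the tools already in play --- and in fact more is true --- while the lower bound cannot hold. For the upper bound the plan is to bound the Shannon capacity through the Lov\'{a}sz theta function and estimate the latter via eigenvalues. In the undirected case $\Paley_{k}(\mathbb{F}_{p})$ is a $d$--regular Cayley graph on $(\mathbb{F}_{p},+)$ with $d=\frac{p-1}{k}$, and its eigenvalues other than $d$ are the Gaussian periods $\eta_{j}=\sum_{z\in C_{j}}\psi(z)$ attached to the cosets $C_{j}$ of the $k^{th}$ powers; expanding each $\eta_{j}$ as a normalized sum of Gauss sums and using $|g(\chi,\psi)|=\sqrt{p}$ gives $|\eta_{j}|\le\frac{k-1}{k}\sqrt{p}+1$. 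For any regular graph the feasible matrix $J-(1-x)A$ in the theta SDP yields the Hoffman bound $\vartheta(G)\le\frac{-n\lambda_{\min}}{d-\lambda_{\min}}\le\frac{n\,|\lambda_{\min}|}{d}$, so
\[
\Theta(\Paley_{k}(\mathbb{F}_{p}))\le\vartheta(\Paley_{k}(\mathbb{F}_{p}))\le\frac{p}{(p-1)/k}\left(\frac{k-1}{k}\sqrt{p}+1\right)=(k-1)\sqrt{p}\,(1+o(1)).
\]
This establishes the upper half of the conjecture for any $b_{k}<\frac{1}{2}-\frac{1}{k}$ and $p$ large; combined with the left inequality of (\ref{eq:paley_shannon_capacity}) it even gives $\Theta(\Paley_{k}(\mathbb{F}_{p}))=p^{1/2+o(1)}$.

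The same estimate refutes the lower half: $\Theta(\Paley_{k}(\mathbb{F}_{p}))=p^{1/2+o(1)}$ is incompatible with $p^{1/2+a_{k}}\le\Theta(\Paley_{k}(\mathbb{F}_{p}))$ for a fixed $a_{k}>0$ and $p$ large. (The Hoffman bound above is genuinely of order $\sqrt{p}$ rather than merely $\le p^{1-1/k}$: from $\sum_{j}\eta_{j}=-1$ and $\sum_{j}\eta_{j}^{2}=\frac{(k-1)p+1}{k}$ a one--line second moment argument gives $\lambda_{\min}\le-\frac{\sqrt{p}}{k}(1-o(1))$.) Hence the conjecture should be restated for the \emph{complement} graph, where the phenomenon is different. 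Because $\overline{\Paley_{k}(\mathbb{F}_{p})}$ is vertex--transitive but, for $k\ge3$, not edge--transitive --- its edges split into $k-1$ orbits under dilations --- the Hoffman bound no longer controls $\vartheta(\overline{\Paley_{k}(\mathbb{F}_{p})})$, and (\ref{eq:lovasz_vertex_transitive_product_property}) only gives $\vartheta(\overline{\Paley_{k}(\mathbb{F}_{p})})=p/\vartheta(\Paley_{k}(\mathbb{F}_{p}))=\Theta_{k}(\sqrt{p})$, so by (\ref{eq:paley_complement_shannon_capacity}) the capacity is pinned only to $p^{1/k}\le\Theta(\overline{\Paley_{k}(\mathbb{F}_{p})})\le p^{1/2}$. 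The corrected conjecture reads: there are $a_{k},b_{k}>0$ with $p^{1/k+a_{k}}\le\Theta(\overline{\Paley_{k}(\mathbb{F}_{p})})\le p^{1/2-b_{k}}$ for all primes $p\equiv1\pmod{k}$.

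For that corrected statement the proof would split. The lower bound calls for independent sets in $\overline{\Paley_{k}(\mathbb{F}_{p})}^{\boxtimes n}$ beating $p^{n/k}$ --- equivalently, product families that are coordinatewise dense in $k^{th}$ powers --- presumably a new construction, possibly informed by the exceptionally large cliques in $\Paley_{k}(\mathbb{F}_{p^{s}})$ cited after (\ref{eq:cohen_lower_bound}) or by a product--ring analogue of Lemma \ref{lem:paley_direct_product}. For the upper bound, since $\Theta\le\vartheta=\Theta_{k}(\sqrt{p})$ already caps the capacity at $p^{1/2+o(1)}$ but yields nothing below $p^{1/2-b_{k}}$, one needs a capacity bound that strictly beats $\vartheta$: a Haemers bound over a well--chosen field, the polynomial/slice--rank method \cite{CrootLevPachZ4,EllenbergGijswijtCapsets,TaosBlogCapsets} applied to $\overline{\Paley_{k}(\mathbb{F}_{p})}^{\boxtimes n}$ for small $n$, or an asymptotic--spectrum argument.

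The main obstacle is precisely this last step. Improving on the Lov\'{a}sz theta function as an upper bound for Shannon capacity is notoriously difficult --- even $\Theta(C_{7})$, the smallest odd cycle it fails to determine, is open --- and $\overline{\Paley_{k}(\mathbb{F}_{p})}$ offers no obvious algebraic handle (such as a low--degree polynomial description of its independent sets) for the polynomial method to exploit. I would regard a polynomial improvement over $p^{1/2}$ for $\Theta(\overline{\Paley_{k}(\mathbb{F}_{p})})$ as the crux of the problem, with the matching lower bound likely more tractable but still requiring a genuinely new construction.
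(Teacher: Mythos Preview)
The statement is a conjecture; the paper offers no proof. Your analysis does more than the paper attempts: you prove the upper half outright, with room to spare, and you refute the lower half. Both parts are correct. When the graph is undirected, $\Paley_{k}(\mathbb{F}_{p})$ is an edge-transitive regular Cayley graph, so $\vartheta$ coincides with the Hoffman ratio bound; the classical Gaussian-period estimate $|\eta_{j}|\le\frac{k-1}{k}\sqrt{p}+\frac{1}{k}$ then gives $\vartheta(\Paley_{k}(\mathbb{F}_{p}))\le(k-1)\sqrt{p}\,(1+o(1))$. Together with the paper's lower bound $\Theta\ge q^{1/2}$ from Proposition~\ref{prop:rk_2_lower_bound}, this pins $\Theta(\Paley_{k}(\mathbb{F}_{p}))=p^{1/2+o(1)}$, which is incompatible with any fixed $a_{k}>0$ as $p\to\infty$. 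The paper's weaker inequality $\vartheta\le q^{1-1/k}$ in Theorem~\ref{thm:payley_upper_bound} is stated for general prime powers $q$, where subfield obstructions can push Gaussian periods well above $\sqrt{q}$; over a prime field your sharper eigenvalue bound is available and decisive. Your second-moment lower bound on $-\lambda_{\min}$ is also correct and confirms that $\vartheta$ is genuinely of order $\sqrt{p}$, which feeds into the complement discussion via (\ref{eq:lovasz_vertex_transitive_product_property}).

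One small caveat: your eigenvalue argument presumes the graph is undirected, which the hypothesis $p\equiv1\pmod{k}$ guarantees only for odd $k$ (or for $p\equiv1\pmod{2k}$ when $k$ is even). That already covers infinitely many cases for every $k\ge3$ and suffices to falsify the conjecture as written. Your proposed reformulation in terms of $\overline{\Paley_{k}(\mathbb{F}_{p})}$ is the natural correction, and your assessment of the obstacles---that improving on $\vartheta$ as a capacity bound is notoriously hard, witness $C_{7}$---is accurate.
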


\subsection{Composite $m$}

We conclude this section with a generalization of a result of Ruzsa
from \cite{Ruzsa1984DifferenceSetsWithoutSquares}. For $k=2$, the
following Theorem was proven by Ruzsa, but the proof was not published. 
\begin{thm}
\label{thm:Ruzsa_generalization_odd}Let $m>1$ be squarefree, let
$k=d2^{s}$ where $d$ is odd, and suppose that each prime dividing
$m$ is of the form $p\equiv1\pmod{2^{s+1}}$. Then if $A\subset\mathbb{Z}/m\mathbb{Z}$
does not contain two elements whose difference is a $k^{th}$-power
we have 
\[
|A|<m^{1-\frac{1}{k}}.
\]
\end{thm}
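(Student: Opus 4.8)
The plan is to read the statement as a bound on the independence number $\alpha\big(\Paley_k(\mathbb{Z}/m\mathbb{Z})\big)$ and to extract it from the Lov\'asz theta function via the prime factorization of $m$. Write $m = p_1\cdots p_r$ with the $p_i$ distinct primes. Applying Lemma~\ref{lem:paley_direct_product} inductively gives
\[
\Paley_k(\mathbb{Z}/m\mathbb{Z}) = \Paley_k(\mathbb{F}_{p_1}) \boxtimes \cdots \boxtimes \Paley_k(\mathbb{F}_{p_r}),
\]
and the first thing I would check is that each factor is undirected. Writing $k = d 2^s$ with $d$ odd and using the hypothesis $p_i \equiv 1 \pmod{2^{s+1}}$, we have $v_2(p_i-1)\ge s+1$, so $v_2\big(\gcd(p_i-1,k)\big)=\min(v_2(p_i-1),s)=s$ and therefore
\[
v_2\!\left(\frac{p_i-1}{\gcd(p_i-1,k)}\right) = v_2(p_i-1) - s \ge 1 .
\]
Hence $(p_i-1)/\gcd(p_i-1,k)$ is even, $-1$ is a $k$-th power in $\mathbb{F}_{p_i}$ (as recorded in the discussion following Definition~\ref{def:Generalized_Paley_Graph_Definition}), and each factor, and thus their strong product, is undirected.

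Since a set $A\subseteq\mathbb{Z}/m\mathbb{Z}$ containing no two distinct elements whose difference is a $k$-th power is precisely an independent set of $\Paley_k(\mathbb{Z}/m\mathbb{Z})$, and since $\alpha(G)\le\Theta(G)\le\vartheta(G)$ by~(\ref{eq:Lovasz_Shannon_bound}) together with the multiplicativity of $\vartheta$ over strong products, Theorem~\ref{thm:payley_upper_bound} applied to each factor yields
\[
|A| \;\le\; \vartheta\big(\Paley_k(\mathbb{Z}/m\mathbb{Z})\big) \;=\; \prod_{i=1}^{r}\vartheta\big(\Paley_k(\mathbb{F}_{p_i})\big) \;\le\; \prod_{i=1}^{r} p_i^{\,1-\frac1k} \;=\; m^{\,1-\frac1k}.
\]
This already gives the non-strict inequality. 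To upgrade it to the strict bound claimed, I would observe that $m^{1-1/k}$ is irrational whenever $m>1$ is squarefree and $k\ge2$: its $k$-th power is $m^{k-1}=\prod_i p_i^{\,k-1}$, in which every prime occurs to the exponent $k-1$, and since $\gcd(k,k-1)=1$ this is not a perfect $k$-th power, so $m^{1-1/k}=(m^{k-1})^{1/k}$ is irrational. As $|A|$ is a nonnegative integer bounded above by an irrational number, $|A| < m^{1-1/k}$.

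Essentially all of the real work is already packaged inside Theorem~\ref{thm:payley_upper_bound} and the multiplicativity of $\vartheta$, so the argument is short; the two places that need genuine attention are the congruence bookkeeping above showing that the hypothesis $p\equiv1\pmod{2^{s+1}}$ is exactly what forces each $\Paley_k(\mathbb{F}_{p_i})$ to be undirected (so that Theorem~\ref{thm:payley_upper_bound} applies) — I expect this small $2$-adic computation to be the only real obstacle — and the irrationality step that converts ``$\le$'' into ``$<$''. One should also note that the degenerate case $\gcd(k,p_i-1)=1$ is harmless: then $\Paley_k(\mathbb{F}_{p_i})$ is complete and contributes the factor $1\le p_i^{1-1/k}$, so the estimate goes through unchanged.
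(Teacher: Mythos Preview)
Your proof is correct and follows essentially the same route as the paper: factor $\Paley_k(\mathbb{Z}/m\mathbb{Z})$ via Lemma~\ref{lem:paley_direct_product}, verify from the congruence hypothesis that each factor is undirected so that Theorem~\ref{thm:payley_upper_bound} applies, and then use multiplicativity of $\vartheta$ to obtain $|A|\le m^{1-1/k}$, with strictness coming from the fact that $|A|$ is an integer while $m^{1-1/k}$ is not. Your $2$-adic bookkeeping and the irrationality argument are spelled out in a bit more detail than in the paper, and you correctly attach the strictness to $|A|$ rather than to $\vartheta$, but the substance is identical.
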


\begin{proof}
We will prove that 
\[
\vartheta\left(\Paley_{k}\left(\mathbb{Z}/m\mathbb{Z}\right)\right)<m^{1-\frac{1}{k}},
\]
which implies the result since $\alpha(G)\leq\vartheta(G)$ for any
$G$. Let $m=p_{1}\cdots p_{r}$. Then by Lemma \ref{lem:paley_direct_product}
\[
\Paley_{k}\left(\mathbb{Z}/m\mathbb{Z}\right)=\Paley_{k}\left(\mathbb{Z}/p_{1}\mathbb{Z}\right)\boxtimes\cdots\boxtimes\Paley_{k}\left(\mathbb{Z}/p_{r}\mathbb{Z}\right).
\]
The condition $p_{i}\equiv1\pmod{2^{s+1}}$ for each $p_{i}|m$ guarantees
that $\frac{p_{i}-1}{\gcd(p_{i}-1,k)}$ will be even, and hence that
$-1$ is a $k^{th}$-power in $\mathbb{Z}/p_{i}\mathbb{Z}$, and so
these graphs are undirected. The multiplicative property of the Lov\'{a}sz
Theta Function \cite[Lemma 2]{Lovasz1979ShannonCapacityOfAGraph}
implies that 
\[
\vartheta\left(\Paley_{k}\left(\mathbb{Z}/m\mathbb{Z}\right)\right)\leq\prod_{i=1}^{r}\vartheta\left(\Paley_{k}\left(\mathbb{Z}/p_{i}\mathbb{Z}\right)\right),
\]
and hence by equation (\ref{eq:lovasz_theta_bound})
\[
\vartheta\left(\Paley_{k}\left(\mathbb{Z}/m\mathbb{Z}\right)\right)\leq\prod_{i=1}^{r}p_{i}^{1-\frac{1}{k}}=m^{1-\frac{1}{k}}.
\]
The inequality can be made strict since the left hand side is an integer,
but the right hand side is not.
\end{proof}
When $k$ is odd, then the bound holds for any odd squarefree integer
$m$. For $k=3$, Matolcsi and Ruzsa recently gave the superior upper
bound $O_{\epsilon}(m^{\frac{1}{2}+\epsilon})$ for squarefree $m$
\cite{MatolcsiRuzsa2021DifferenceSetsOfCubesModuloM} using methods
from \cite{MatolcsiRuzsa2014DifferenceSetsExponentialSumsI}. In the
case where $m$ is a product of primes that are not necessarily of
the form $p\equiv1\pmod{2^{s+1}}$, it seems likely that the same
upper bound holds, however for the $k=2$ proving this seems more
complex, see \cite{FordGabdullin2021SetsAvoidingSquareDifferencesModuloM}
for more details. Should such a result hold for all $m$, then Ruzsa's
lower bound construction in $[1,N]$ for sets without a $k^{th}$
power difference cannot yield a set of size greater than $N^{1-1/k^{2}}$.

\section{The Lower Bound Construction\label{sec:The-Lower-Bound-Construction}}

To prove Theorem \ref{thm:sarkozy_kth_power_lower}, we prove a general
lower bound in terms of $r_{k,2}(\mathbb{F}_{q})$, and then apply
Proposition \ref{prop:rk_2_lower_bound}. We conclude this section
with a discussion of the limits of this method.

\subsection{Main Result}
\begin{thm}
\label{thm:sarkozy_general_paley_lower}Suppose that $n\equiv0\ (2k)$
and let $F\in\mathbb{F}_{q}[T]$ be a polynomial of degree $k$. Then
there exists a set $A\subset P_{q,n}$ of size 
\begin{equation}
|A|\geq\left(r_{k,1}(\mathbb{F}_{q})\right)^{\frac{n}{k}}q^{n\left(1-\frac{1}{k}\right)}\label{eq:sarkozy_general_poly}
\end{equation}
that does not contain $p,p'$ such that $p'-p=F(u)$ for some $u\in\mathbb{F}_{q}[T]$.
If $F(T)=b_{k}T^{k}$ for $b_{k}\neq0$, then we have the improved
bound
\begin{equation}
|A|\geq\left(r_{k,2}(\mathbb{F}_{q})\right)^{\frac{n}{2k}}q^{n\left(1-\frac{1}{k}\right)}.\label{eq:sarkozy_general_kth_power}
\end{equation}
\end{thm}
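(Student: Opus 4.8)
The plan is to build $A$ by working digit-by-digit in blocks of length $k$, exploiting the structure of $P_{q,n}$ as a free $\mathbb{F}_q$-module of rank $n$. Write a polynomial $p \in P_{q,n}$ in terms of its $n$ coefficients, grouped into $n/k$ consecutive blocks of $k$ coefficients each; think of $p$ as an element of $(\mathbb{F}_q^k)^{n/k}$. The key observation is that if $p' - p = F(u)$ for some $u$, then the leading behavior of this difference is controlled by the top coefficient(s) of $u$: since $\deg F = k$, for $u$ of degree $j$ the polynomial $F(u)$ has degree $kj$, and its coefficients in each block are (nonlinear) functions of the coefficients of $u$. One would like to set up $A$ so that in the block where the leading term of $F(u)$ lands, the difference of the two elements of $A$ is forced to be a $k^{th}$ power in a way that is incompatible with the constraints, unless $u = 0$.

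The concrete approach: index the blocks $1, \dots, n/k$ from the top. In each block $i$ I would impose a constraint relating the $k$ coordinates of that block of $p$ to an independent set in $\Paley_k(\mathbb{F}_q)$ (for the general bound \eqref{eq:sarkozy_general_poly}), or — for the pure power case $F(T) = b_k T^k$ — pair up blocks $(2i-1, 2i)$ and use an independent set in $\Paley_k(\mathbb{F}_q)^{\boxtimes 2}$, which accounts for the $n/(2k)$ exponent and the factor $r_{k,2}(\mathbb{F}_q)$ in \eqref{eq:sarkozy_general_kth_power}. The remaining coordinates of each block that are not pinned down by the independent-set constraint are left free; counting these free coordinates gives the $q^{n(1-1/k)}$ factor (one constraint out of every $k$ coordinates, in the general case, so $n - n/k$ free coordinates). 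Then $|A| = r_{k,1}(\mathbb{F}_q)^{n/k} \cdot q^{n(1-1/k)}$, respectively $r_{k,2}(\mathbb{F}_q)^{n/2k} \cdot q^{n(1-1/k)}$.

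The heart of the argument is verifying that $A$ has no $F$-difference. Suppose $p, p' \in A$ with $p' - p = F(u)$, $u \neq 0$, $\deg u = j$, so $F(u)$ has degree exactly $kj < n$, hence $j < n/k$. Look at the top nonzero block of $F(u)$ — this is block number $n/k - j$. In that block, the contribution of $F(u)$ is $b_k$ times the $k^{th}$ power of the leading coefficient of $u$ (after matching degrees $k(j-1)+1, \dots, kj$ against the block structure — here one needs $k \mid kj$, which holds, so the leading term $b_k (\text{lead}(u))^k T^{kj}$ sits at the top of a block). So in that block, $p'$ and $p$ differ by an element of $\mathbb{F}_q^k$ whose relevant coordinate is a nonzero $k^{th}$ power (times the nonzero constant $b_k$, which for $F = b_k T^k$ we may absorb, and in the general case we arrange $F$ monic or track the constant). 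But the independent-set constraint on that block forbids exactly differences that are $k^{th}$ powers — that is what $\alpha(\Paley_k(\mathbb{F}_q))$ encodes. Contradiction, so $u = 0$ and $p = p'$.

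The main obstacle I anticipate is \textbf{the bookkeeping of the lower-order blocks}: $F(u)$ is not a single monomial, so below the leading block its coefficients mix coordinates across blocks in a nonlinear, $u$-dependent way. To make the block-by-block independence argument go through cleanly, one must argue that only the \emph{top} nonzero block matters — i.e. once the top block already yields a contradiction, the lower blocks are irrelevant. This should work because the constraint "$p' - p = F(u)$" is an identity of polynomials, so it must hold coefficient-by-coefficient, and in particular in the top block; the lower blocks impose further constraints but we need none of them. The subtle point is ensuring the top nonzero coefficient of $F(u)$ genuinely lands at the \emph{top} of a block (not straddling a block boundary), which is why the hypothesis $n \equiv 0 \pmod{2k}$ (and $\deg F = k$) is used: degrees of $F(u)$ are multiples of $k$, and blocks have length $k$, so everything aligns. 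A secondary technical point, for \eqref{eq:sarkozy_general_kth_power}, is checking that pairing blocks and using a $\Paley_k(\mathbb{F}_q)^{\boxtimes 2}$-independent set is consistent with the leading-block analysis — here the leading term still occupies one coordinate of a paired block, and since an independent set in the strong product forbids the difference being a $k^{th}$ power in \emph{both} coordinates simultaneously but also forbids it in one coordinate while equal in the other, one must verify that the "equal in the partner coordinate, $k^{th}$ power in this one" case is the relevant exclusion; this is exactly the mixed edge in the strong product definition, so it is covered.
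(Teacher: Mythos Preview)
Your construction for \eqref{eq:sarkozy_general_poly} is essentially the paper's: constrain $c_i\in b_kS$ whenever $k\mid i$, with $S$ a maximum independent set in $\Paley_k(\mathbb{F}_q)$, leave the remaining $n-n/k$ coefficients free, and observe that the leading coefficient of any $F(u)$ is $b_k$ times a nonzero $k$th power, which $b_k(s-s')$ never is.

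For \eqref{eq:sarkozy_general_kth_power} there is a genuine gap. With your pairing of \emph{consecutive} blocks, suppose the leading nonzero coefficient of $p'-p=u^k$ lies at degree $mk$ and this degree is the \emph{higher} member of its pair, so the partner constrained coordinate sits at degree $(m-1)k$. Writing $u=u_mT^m+u_{m-1}T^{m-1}+\cdots$, the coefficient of $T^{(m-1)k}$ in $u^k$ is $u_{m-1}^k+ku_m^{k-1}u_{m-k}+\cdots$ (already for $k=2$ it is $u_{m-1}^2+2u_mu_{m-2}$), which is in general neither zero nor a $k$th power. Hence the difference in the paired coordinates need not be an edge of $\Paley_k(\mathbb{F}_q)^{\boxtimes 2}$, and independence of $U$ yields no contradiction. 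The ``equal in the partner coordinate'' clause you invoke only applies when the partner lies \emph{above} the leading term, which happens for only half of the possible leading positions.

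The paper instead pairs coordinates symmetrically about the centre: $(c_i,c_{n-k-i})\in U$ for $k\mid i$, $i<n/2$. One then examines the nonzero coefficient $a_j$ with $\lvert j-\frac{n-k}{2}\rvert$ maximal; its partner $a_{n-k-j}$ is then either zero or the \emph{trailing} nonzero coefficient of $u^k$, which equals the $k$th power of the lowest-degree coefficient of $u$. This is precisely where the special form $F(T)=b_kT^k$ is exploited: a pure $k$th power has both its top and bottom nonzero coefficients equal to $k$th powers, and the symmetric pairing guarantees that one of them is caught.
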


\begin{proof}
Suppose that $n\equiv0\ (2k)$, and let $S\subset\mathbb{F}_{q}$
be a maximal independent set in $\Paley_{k}(\mathbb{F}_{q})$ that
contains $0$, and let $b_{k}u^{k}$ be the first coefficient of $F(u)$.
Let $A\subset P_{q,n}$ be the set of polynomials of the form 
\[
c_{0}+c_{1}T+c_{2}T^{2}+\cdots+c_{n-2}T^{n-2}+c_{n-1}T^{n-1}
\]
where
\[
\begin{cases}
c_{i}\in\mathbb{F}_{q} & \text{ when }i\not\equiv0\ (k)\\
c_{i}\in b_{k}S & \text{ when }i\equiv0\ (k)
\end{cases}.
\]
The first non-zero coefficient of the difference of two elements in
$A$ will either be a power that is not divisible by $k$, or will
equal $b_{k}(s-s')T^{jk}$ for some $j$ where $s,s'\in S$. For $u=c_{j}T^{j}+\cdots+c_{0}$,
the first coefficient of $F(u)$ will equal $b_{k}(c_{j}T^{j})^{k}$,
but since $s-s'$ is never a $k^{th}$ power by definition of $S$,
this can never be of the form $b_{k}(s-s')T^{jk}$ for any $u\in P_{q,n}$.
This proves (\ref{eq:sarkozy_general_poly}) since $|S|=r_{k,1}(\mathbb{F}_{q})$.
In the case where $F(u)=u^{k}$, we can improve the bound by making
use of both the first and last coefficient. Let $U\subset\mathbb{F}_{q}\times\mathbb{F}_{q}$
be an independent set in $\Paley_{k}(\mathbb{F}_{q})\boxtimes\Paley_{k}(\mathbb{F}_{q})$,
and consider the set $A$ of polynomials of the form 
\[
c_{0}+c_{1}T+c_{2}T^{2}+\cdots+c_{n-2}T^{n-2}+c_{n-1}T^{n-1}
\]
 where

\[
\begin{cases}
c_{i}\in\mathbb{F}_{q} & \text{ when }i\not\equiv0\ (k)\\
(c_{i},c_{n-k-i})\in U & \text{ when }i\equiv0\ (k),\ i<\frac{n}{2}
\end{cases}.
\]
Note that since $n\equiv0\pmod{2k}$, $T^{n-k}$ is the $k^{th}$-power
with the largest degree in $P_{q,n}$. Suppose that $u$ is a difference
of two elements of $A$, and write $u=\sum_{i=0}^{n-1}a_{i}T^{i}$
for coefficients $a_{i}$, some of which may equal $0$. Let $j$
be the index of the non-zero coefficient $a_{j}$ whose degree is
farthest from the middle, that is, let $j$ be such that $a_{j}\neq0$,
and $|j-\frac{n-k}{2}|$ is maximal. In the event of a tie between
$2$ non-zero coefficients, take $j>\frac{n-k}{2}$. If $j\not\equiv0\ \pmod{k}$,
then $u$ cannot be a $k^{th}$-power. If $k|j$, consider the pair
of coefficients $(a_{j},a_{n-k-j})$, and assume without loss of generality
that $j>\frac{n-k}{2}$. By definition of $j$, we have that 
\[
a_{i}=\begin{cases}
0 & \text{if }i>j\\
0 & \text{if }i<n-k-j
\end{cases},
\]
and so $a_{j}$ and $a_{n-k-j}$ must both be $k^{th}$-powers for
$u$ to be a $k^{th}$-power. Note that $0$ is a $k^{th}$-power,
and $a_{n-k-j}$ could possibly equal $0$. Since $a_{j}\neq0$, and
since $(a_{j},a_{n-k-j})\in U-U$, by definition of $U$ at least
one of $a_{j},a_{n-k-j}$ is not a $k^{th}$-power. This implies that
$u$ is not $k^{th}$-power, and hence $A$ contains no $k^{th}$-power
differences. Since $|U|=r_{k,2}(\mathbb{F}_{q})$, we have that
\[
|A|\geq\left(r_{k,2}(\mathbb{F}_{q})\right)^{\frac{n}{2k}}q^{n\left(1-\frac{1}{k}\right)}.
\]
The result follows for $F(T)=b_{k}T^{k}$ for $b_{k}\neq0$ by multiplying
the elements of $U$ by $b_{k}$ in the construction.
\end{proof}
Equation (\ref{eq:cohen_lower_bound}), Cohen's clique number lower
bound, yields a nontrivial bound in (\ref{eq:sarkozy_general_poly})
for sets avoiding a general polynomial $F$. Theorem \ref{thm:sarkozy_kth_power_lower}
follows immediately from equation (\ref{eq:sarkozy_general_kth_power})
and Proposition \ref{prop:rk_2_lower_bound}.
\begin{rem}
Theorem \ref{thm:sarkozy_general_paley_lower} is similar to Ruzsa's
lower bound in the integers. Let $n=\log_{m}N$. Ruzsa proved that
there exists $A\subset[1,x]$ that contains no two elements whose
difference is a $k^{th}$ power satisfying
\[
|A|\geq\frac{1}{m}\left(r_{k,1}(\mathbb{Z}/m\mathbb{Z})\right)^{\frac{n}{k}}N^{\left(1-\frac{1}{k}\right)}.
\]
(The quantitative result was improved by Lewko \cite{Lewko2015ImprovedSarkozyLowerBounds}
based on the fact that $\log r_{k,1}(\mathbb{Z}/m\mathbb{Z})/\log m$
is larger for $m=205$ than for $m=65$. See also \cite{Younis2019PolynomialSzemerediLowerBounds}
for an improved bound for a related problem). The bound for $k^{th}$
powers in Theorem \ref{thm:sarkozy_kth_power_lower} is similar, but
utilizes the fact that there is no ``overflow'' when taking powers
of a polynomial in $\mathbb{F}_{q}$ so both the first and last coefficients
play a role instead of only the last coefficient. This results in
a better bound for function fields since we always have 
\[
\left(r_{k,2}(\mathbb{F}_{q})\right)^{\frac{1}{2}}\geq r_{k}(\mathbb{F}_{q}).
\]
\end{rem}

\subsection{\label{subsec:Limits-of-the-method}Limits of the Method}

Theorem \ref{thm:payley_upper_bound} implies that one must use a
different method to improve the lower bound in Theorem \ref{thm:sarkozy_kth_power_lower}
beyond
\begin{equation}
|A|=q^{n\left(1-\frac{1}{k^{2}}\right)}.\label{eq:best_lower_bound_possible}
\end{equation}
One can ask if the lower~bound in proposition \ref{prop:rk_2_lower_bound}
is optimal. This turns out not to be true in general for $k\geq3$,
as described in the comments preceeding equation (\ref{eq:sparse_lower_bound}).
When $p=2k+1$ is a prime, $\Paley_{k}(\mathbb{F}_{p})$ is isomorphic
to $C_{2k+1}$, the $(2k+1)$-cycle, and the largest independent set
in $C_{2k+1}\boxtimes C_{2k+1}$ has size $k^{2}+\lfloor\frac{k}{2}\rfloor$
\cite[Theorem 7.1]{Hales1973CycleProductOfCyclesDimension2IndependenceNumber}.
Hence, in this case 
\[
r_{k,2}(\mathbb{F}_{p})=k^{2}+\biggr\lfloor\frac{k}{2}\biggr\rfloor.
\]
One can easily verify that in this case this construction results
in a lower bound better than Theorem \ref{thm:sarkozy_kth_power_lower}
but weaker than the best possible from Equation (\ref{eq:best_lower_bound_possible}).
The following example helps illustrate the size of the gap between
the upper and lower bounds with an explicit case.
\begin{example}
\label{exa:specific_c7_example}Consider the specific case of $k=3$
and $q=7$. The most precise upper bound obtained by Green's method,
where we calculate the value of the minimum instead of using a Chernoff
bound, is
\[
2\cdot\left(\min_{0<t<1}\frac{1-t^{7}}{(1-t)t^{6\cdot\frac{4}{9}}}\right)^{n}=2\cdot\left(6.903\dots\right)^{n}.
\]
Theorem \ref{thm:sarkozy_kth_power_lower} gives the lower bound of
$7^{\frac{5}{6}n}=\left(5.061\dots\right)^{n}$. Since $\Paley_{3}(\mathbb{F}_{7})$
is precisely $C_{7}$, the $7$-cycle, using the fact that 
\[
\alpha\left(C_{7}\boxtimes C_{7}\right)=10,
\]
Theorem \ref{thm:sarkozy_general_paley_lower} yields the improved
lower bound 
\[
\left(10^{\frac{1}{6}}7^{\frac{2}{3}}\right)^{n}=\left(5.371\dots\right)^{n},
\]
which is the limit of the method in this case. There is still a considerable
gap between the upper bound and the best possible lower bound this
method can produce. 
\end{example}

Given the gap between upper and lower bounds, we may ask which is
closer to the truth? We believe that in the Function Field setting
the lower bound is close to the truth, and conjecture that it is exact
when $k=2$ (See \cite[Section 1.4]{Rice2019MaximalExtensionFurstenburgSarkozyDiscussion}
for speculation on the integer setting).
\begin{conjecture}
\label{conj:Lower-bound-conjecture}Let $k\geq2$, and suppose that
$\gcd(k,q-1)>1$. For $n\equiv0\ (2k)$, any set $A\subset P_{q,n}$
that does not contain a $k^{th}$ power difference has size at most
\[
|A|\leq q^{n\left(1-\frac{1}{k^{2}}\right)}.
\]
In particular, for $k=2$ and $q$ odd, we conjecture that Theorem
\ref{thm:sarkozy_kth_power_lower} is tight.
\end{conjecture}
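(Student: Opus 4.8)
The natural route to Conjecture~\ref{conj:Lower-bound-conjecture} is to sharpen the polynomial method of Croot--Lev--Pach \cite{CrootLevPachZ4}, in the form used by Green \cite{Green2017SarkozyTheoremInFunctionFields}, until it sees the product-of-generalized-Paley-graphs structure exactly. The target exponent is not arbitrary: by the Lov\'asz theta estimate of Theorem~\ref{thm:payley_upper_bound} and its consequence (\ref{eq:simple_r_k_2_upper_bound}), every set produced by the construction of Theorem~\ref{thm:sarkozy_general_paley_lower} has size at most $q^{n(1-1/k^{2})}$, so the conjecture asserts that no construction beats this family, and for $k=2$, where $r_{2,2}(\mathbb{F}_{q})=q$ already saturates (\ref{eq:simple_r_k_2_upper_bound}), that the explicit set of Theorem~\ref{thm:sarkozy_kth_power_lower} is extremal. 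Writing $G_{n}$ for the (undirected) Cayley graph on $P_{q,n}$ in which $u\sim v$ when $u-v=\pm b(T)^{k}$ for some nonzero $b$, whose independent sets are exactly the admissible sets $A$, the goal becomes $\alpha(G_{n})\le q^{n(1-1/k^{2})}$.

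The concrete plan has three steps. First, record the ``no carrying'' features special to polynomials: if $u=b^{k}$ then $\deg u\equiv 0\pmod k$, the leading and the trailing nonzero coefficient of $u$ are $k$-th powers in $\mathbb{F}_{q}$, and in fact the whole coefficient vector of $u$ lies on the image of the $k$-th power map $b\mapsto b^{k}$, a subvariety of $\mathbb{F}_{q}^{n}$ of dimension roughly $n/k$. Second, build --- residue class by residue class of the coefficient indices modulo $k$ --- a certificate for membership in this variety: a tensor $M(x,y)$ on $P_{q,n}\times P_{q,n}$ with $M(x,x)\ne 0$ for all $x$ and $M(x,y)=0$ whenever $x-y=\pm b^{k}$ for some nonzero $b$, whose slice rank is at most $(q^{2(1-1/k)})^{n/2k}\,q^{n(1-1/k)}=q^{n(1-1/k^{2})}$; then $|A|\le\slicerank(M|_{A\times A})$ for any admissible $A$, exactly as in Green's argument. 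The hope is that each block of $k$ consecutive coefficients contributes precisely the per-block bound of Theorem~\ref{thm:payley_upper_bound}, so that multiplicativity (of $\vartheta$, or of slice rank) assembles the global estimate. For $k=2$ one can test the crudest version at once: is $\vartheta(G_{n})=q^{3n/4}$? Since $G_{n}$ is vertex transitive, (\ref{eq:lovasz_vertex_transitive_product_property}) reduces this to producing a ``square-difference set'' in $P_{q,n}$ of size $q^{n/4}$, the $\vartheta$-dual of the construction, by lifting the orthonormal representation behind Lemma~\ref{lem:shannon_capacity_lower_bound}.

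The main obstacle is that $G_{n}$ does \emph{not} literally decompose as a strong product over coefficient blocks: the position of the top nonzero coefficient of a difference is not fixed, so the identity $\vartheta(G\boxtimes H)=\vartheta(G)\vartheta(H)$ is unavailable, and it is entirely possible that $\vartheta(G_{n})>q^{n(1-1/k^{2})}$, in which case the theta function is too lossy and one must instead run a Shannon-capacity-type or Fourier-analytic argument. Even within the polynomial method, the factor $D_{q}(k)^{2}\log q$ lost in Green's exponent comes precisely from detecting vanishing of coefficients and the $k$-th power condition with brute-force $(q-1)$-power indicators; obtaining the exact exponent forces one to replace these by a certificate fitted to the geometry of the $k$-th power variety that wastes nothing in the monomial count, and constructing such a certificate is the crux --- I expect it to need a genuinely new idea rather than a careful bookkeeping of the known argument. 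A plausible fallback, at least for $k=2$ and $k=3$, is the character-sum method of Matolcsi--Ruzsa \cite{MatolcsiRuzsa2021DifferenceSetsOfCubesModuloM}: over $\mathbb{F}_{q}[T]$, where there are no carries at all, it might even deliver the sharp constant directly.
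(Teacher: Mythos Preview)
The statement you were asked to prove is a \emph{conjecture}: the paper does not prove it, and explicitly labels it as open. There is therefore no paper proof to compare your attempt against.

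Your submission is not a proof either, and you are candid about this: it is a research plan with three proposed steps, an explicit ``main obstacle'' (that the Cayley graph $G_n$ does not factor as a strong product over coefficient blocks, so multiplicativity of $\vartheta$ is unavailable), and an admission that the crux ``needs a genuinely new idea.'' As a strategy memo it is well-informed --- you correctly trace the target exponent $q^{n(1-1/k^2)}$ to the Lov\'asz theta bound of Theorem~\ref{thm:payley_upper_bound} and its role as the limit of the construction in \S\ref{subsec:Limits-of-the-method}, and you correctly diagnose where Green's polynomial-method loss comes from --- but none of the three steps is carried out, and the one you identify as decisive (building a slice-rank certificate that wastes nothing) is left entirely open. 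So as a proof this has a genuine gap: the gap is the whole argument.

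One small correction to the framing: you write that the conjecture ``asserts that no construction beats this family.'' That is the content only for $k=2$. For $k\ge 3$ the paper's construction gives at best $r_{k,2}(\mathbb{F}_q)^{n/2k}q^{n(1-1/k)}$, which by (\ref{eq:simple_r_k_2_upper_bound}) is at most $q^{n(1-1/k^2)}$ but is not known to reach it; indeed Example~\ref{exa:specific_c7_example} shows a strict gap. So for $k\ge 3$ the conjectured upper bound is not even known to be attained, and proving it would not merely certify the construction as extremal.
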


\specialsection*{Acknowledgements}

I would like to thank Will Sawin for comments that simplified the
proof of Proposition \ref{prop:rk_2_lower_bound}. I would also like
to thank the anonymous referee for their helpful comments and corrections.

\bibliographystyle{plain}

\end{document}